\newtheorem*{theorem}{Theorem}
\newtheorem*{proposition}{ Proposition}
\newtheorem*{lemma}{ Lemma}
\newtheorem*{corollary}{Corollary}
\newtheorem*{definition}{Definition}
\theoremstyle{remark}
\def \1{\mathbb {1}}
\def \RM{\mathbb {R}}
\def \NM{\mathbb{N}}
\def \CM{\mathbb{C}}
\def \QM{\mathbb{Q}}
\def \Der {{\rm Der\,}}
\def \Aut {{\rm Aut\,}}
\def \p {{\rm exp\,}}
\def \d{\partial}
\def\a{\alpha}
\def\b{\beta}
\def\e{\varepsilon}
\def\l{\lambda}
\def\L{\Lambda}
\def\p{\varphi}
\def\G{\Gamma}   
\def \s{\sigma}
\def \to{\longrightarrow} 
\def \w{\wedge}
\def \alg{\mathfrak{g}}
\def \< {{\langle }}
\def \> {{\rangle }}
\def \( {\left( }
\def \) {\right) }
\newcommand{\Bt}{{\mathcal B}}
\newcommand{\Kt}{{\mathcal K}}
\newcommand{\Lt}{{\mathcal L}}
\newcommand{\Mt}{{\mathcal M}}
\newcommand{\Ot}{{\mathcal O}}
\newcommand{\Xt}{{\mathcal X}}
\newcommand{\Oth}{\widehat{\Ot}}
\newcommand{\Xth}{\widehat{\Xt}}
\renewcommand{\mod}{{\rm  mod\,}}
\title[Versal deformations of vector fields]{Versal deformations of\\ vector field singularities}
\author[M. Garay D. van Straten]{Mauricio GARAY and Duco van Straten}
\begin{document}
\begin{abstract}
When a singular point of a vector field passes through resonance, a formal 
invariant cone appears. In the seventies, Pyartli proved that for $(-1,1)$-resonance the cone 
is in fact analytic and is the degeneration of a family of invariant cylinders. In his thesis, 
Stolovitch established a new type of normal form and proved that for a simple resonance and under 
arithmetic conditions the cone is (the germ of) an analytic variety.  In this paper, we prove a 
versal deformation theorem for analytic vector fields with an isolated singularity over Cantor sets.
Our result implies that, under arithmetic conditions, the resonant cone is the degeneration of a 
set of invariant manifolds like in Pyartli's example. For the multi-Hopf bifurcation, that is for 
the $(-1,1)^d$-resonance, this implies the existence of vanishing tori carrying quasi-periodic 
motions generalising previous results of Chenciner and Li.
\end{abstract}
\maketitle
\tableofcontents
\section{Introduction}
In the formal neighbourhood of an analytic planar vector field one may find an invariant algebroid curve, 
classically called a {\em separatrix}. The study of these curves goes back to Briot-Bouquet, Dulac, Picard, 
Poincar\'e and was finally solved by  Camacho and Sad, who proved their existence in full 
generality~\cite{Briot_Bouquet,Camacho_Sad,Dulac_1912,Picard_tome_3,Poincare_these}.

These early investigations on separatrices were intimately connected to the concept of {\em normal forms and deformations} 
near equilibrium; the idea being that if a vector field can be reduced to a simple form by a change of variables, then it 
would be easy to find the separatrices. But due to the appearance of small denominators, the issue of convergence of the
required transformation turned out to be very non-trivial. In fact, the Poincar\'e-Siegel divergence theorems suggest 
that there is no analytic normal forms or versal deformations over a smooth base~\cite{Poincare_trois,Siegel_divergence}.

In the work of Kolmogorov, Siegel, Bruno and R\"ussmann profound results were obtained with regard to convergence questions 
outside the Poincar\'e domain~\cite{Bruno,Kolmogorov_KAM,Russmann,Siegel_vecteurs}.
As a rule, the normalising transformations diverge and only in cases of integrability one may expect convergence of the 
normalising transformations. The dichotomy theorems of Ilyashenko and Perez-Marco show that these integrable cases 
are exceptional~\cite{Ilyashenko_divergence,Perez_Marco_divergence,Perez_Marco}. For excellent overviews on these topics 
we refer the reader to~ \cite{Ilyashenko_Yakovenko,Martinet_Bourbaki,Stolovitch_progress}.

So in the light of KAM theory, the best one can hope for is the construction of a convergent versal 
deformations/normal forms {\em over a Cantor set of positive measure}. It  is the purpose of this paper to state and proves such a result. 

Unlike the case of Hamiltonian dynamics and KAM theory, the Cantor set we construct
is not invariant and integral curves may not be all contained inside the set. One may ask: what is the use a vector field over such 
a set?  The answer is quite simple: although the Cantor set is not invariant, it may 
very well contain an interesting subset of persistant invariant varieties. In this way we are led back to the very origins of normal forms alluded above.

Hyperbolic vector fields being linearisable in the $C^\infty$-category, divergence and small denominators are 
invisible in the smooth category~\cite{Sternberg_1958}~(incidentally this is not the case in the Gevrey category 
see~ \cite{Stolovitch_2013}). This underlines the relevance looking at vector fields in the complex domain,  as was 
traditionally done in the early works on the subject.

The simplest example is given by the $(1,-1)$ resonance, leading to the 
so-called {\em Hopf bifurcation}:
$$\left\{ \begin{matrix} \dot x=(-\a+\mu_1+\mu_2 xy) x \\
\dot y =(\a +\mu_1+\mu_2 xy)y
\end{matrix}\right.
$$
Classically one takes $\a$ is purely imaginary and restricts the flow to
the real subspace defined by $x =\bar y=z$. For $\mu=0$, the cone $xy=0$ is an analytic invariant variety that we 
call the {\em resonant cone}. In the real case, the curves $xy=\e$, that is the circles $|z|^2=\e$, are limit cycles 
for $\e>0$ stable under perturbations~\cite{Andronov_1929,Hopf_1942,Marsden_Hopf}. In the complex domain, these limit 
cycles vanishing at the singular point are real parts of the {\em invariant cylinders} $xy=\e$ that degenerate into the
resonant cone~\cite{Arnold_resonance,Arnold_versal}.  So we find here the classical situation of the {\em vanishing cycles} 
at an $A_1$ singularity~: a horizontal family of 1-cycles generating the homology of the cylinder, which vanishes at the 
singular point~(see for instance~\cite{AVGII,AVGL1}). Pyartli proved the stability  under perturbation of the analytic 
cylinders degenerating into a cone~\cite{Pyartli_resonance}.

It appears that Bruno thought Arnold had conjectured the existence of invariant families in higher dimensions for simple 
resonances and disproved this conjecture~\cite{Bruno_resonance}.  Incidentally, Bruno's result was misquoted by Arnold 
in his famous book on differential equation, where he attributes to Bruno and Pyartli the proof of the existence of such 
families in dimension 3!~(see \cite[Chapter 6, \S 36, E]{Arnold_edo}). What Arnold conjectured or at least hoped was the 
analyticity of the resonant cone for a single resonance and he wrote: {\em However, we can hope that these small perturbations do not destroy the invariant manifold $M_0$.}
 
Apparently unaware of Arnold's problem, \'Ecalle asserted that the conjecture holds, but only under an arithmetic condition 
on the eigenvalues of the vector field~\cite{Ecalle_92}.  Analyticity of the resonant cone was finally proved in 1994  by 
Stolovitch in his thesis. Stolovitch showed also the necessity of an arithmetic condition~\cite{Stolovitch_94}.  
Finally he extended the case of simple resonances to that of so-called {\em positive} resonances, but apparently
made a (minor) mistake in defining positivity~(see \ref{SS::assumptions}).

More recently Stolovitch proved a KAM theorem for vector fields, namely he showed the existence of a positive measure 
set of invariant manifolds, assuming a triviality condition of the Poincar\'e-Dulac normal form~\cite{Stolovitch_KAM}. 
For the case of the Hopf bifurcation, the triviality condition of Stolovitch means that the Poincar\'e-Dulac normal form 
defines a symplectic vector field, so rather than an isolated limit cycle we have a family of circles, the system is 
integrable and there is no bifurcation at all. Similarly for the multi-dimensional Hopf bifurcation, the condition of 
Stolovitch means that the system is Hamiltonian. Therefore in this case, the theorem implies the existence of a positive 
measure set of complex analytic invariant manifolds carrying a complex quasi-periodic motion. In the real elliptic case, 
these are the complexification of KAM tori vanishing at the singularity and the theorems of Stolovitch imply in particular 
that these form a set of positive measure around the singularity~(see also~\cite{Lagrange_KAM}).\\

From the standpoint of a singularist, the KAM theorem of Stolovitch means that there are strata in the versal deformation 
space on which invariant varieties form a positive measure set. So it is natural to ask if such a family could exist over 
the whole space. Such a result turns out to be a consequence of our versal deformation theorem. If we modify Arnold's 
conjecture  in the sense that the family of analytic varieties is not analytic but only continuous (and even Whitney $C^\infty$) over a set of positive measure, then it holds true. 

The invariant manifolds being toric manifolds, the result might be seen as a complex variant of the theory of quasi-periodic 
motions in the dissipative context at a singularity. The general theory of such quasi-periodic motions in the dissipative 
context has been established in the pioneering works of Broer, Bruno, Huitema, Sevryuk and Takens~(see~\cite{Broer_Huitema_Sevryuk_families,Broer_Huitema_Sevryuk_book,Bruno_dissipative}). Applying our result to the multiple Hopf bifurcation, we obtain 
the existence of quasi-periodic motions vanishing  at the singularity. This generalises a result obtained by  Li in dimension 
2 and 3~\cite{Li_Hopf}~(see also~\cite{Chenciner_Hopf} for the corresponding result for discrete dynamical systems).

In this paper we use, as Stolovitch,  positivity assumptions on the linear part of our vector field, but in fact we are 
confident that the results  can be proven under less restrictive conditions. It is for instance plausible that the methods 
for the classification on planar vector field due Malgrange, Martinet-Ramis, Moussu-Cerveau can be used~\cite{Malgrange_Bourbaki, Martinet_Ramis_82,Martinet_Ramis_83,Moussu_Cerveau_88}. 

From a technical point of view, the proof of our versal deformation theorem is a direct application of the general theory of 
normal forms developed in~\cite{Functors}. Therefore, with regards to the details of the proof of convergence, we refer to  that paper on the theory of Banach space valued functors.

\section{Divergent and convergent normal forms}
In the sixties, Bruno started the systematic investigation of differential 
equations over rings of formal power series, and complemented it with important
convergence resuls. But the formal case in itself is already a rich subject.
\subsection{The Poincar\'e-Dulac normal form}
We denote by $\widehat{\Ot}$ the local ring of formal 
power series in $d$-variables $x_1,x_2,\ldots,x_d$:
\[\widehat{\Ot}:=\CM[[x_1,x_2,\ldots,x_d]],\]
and by $\widehat{\Xt}$ the corresponding module of derivations
\[ \widehat{\Xt}:=Der(\widehat{\Ot})=\bigoplus_{i=1}^d \widehat{\Ot}\d_{x_i},\]
So a {\em formal vector field} $X \in \widehat{\Xt}$ is of the form
\[ X=\sum_{i=1}^d a_i(x) \d_{x_i},\;\;\;a_i(x) \in \widehat{\Ot},\] 
and can be written as an expansion of homogeneous components of increasing degree:
$$X=X_0+X_1+X_2+\dots $$
Here and in the sequel the dots stand for higher order terms in the Taylor 
expansion at the origin. 

The vector field is called {\em singular} if $X_0=0$. We say it is 
an {\em isolated singularity}, if the  origin is scheme-theoretically the only
point for which $X_0=0$, or equivalently if the ideal
\[ (a_1,a_2,\ldots,a_n) \subset \Oth\]
is $\Mt$-primary, where $\Mt$ denotes the maximal ideal of $\Oth$.
Such a vector field $X$ induces derivations $X_k$ of $\Mt/\Mt^{k+1}$ and as
any linear map, can be decomposed into semi-simple and nilpotent part:
$$X_k=S_{k}+N_{k},\;\;\; [S_k, N_k]=0,$$
and by taking the limit $k \to +\infty$, we obtain the {\em abstract Poincar\'e-Dulac normal form}
$$X=S+N,\;\;\; [S,N]=0. $$

\subsection{Resonances}
From now on we assume that the linear part of the vector field is semi-simple:
$$X_1=S_1=S,\ N_1=0 .$$ 
In appropriate coordinates the semi-simple part takes the form
$$S= \sum_{i=1}^d \l_i x_i \d_{x_i} ,$$
and we call 
$$\L:=(\l_1,\l_2,\ldots,\l_d)$$
the {\em frequency vector} of $X$.
We denote by
\[ {\Oth}^S:=\{f \in \Oth\;\;|\;\; S(f)=0\} \subset \Oth\]
the {\em ring of invariants} of $S$. A monomial
\[x^R:=x_1^{r_1} x_2^{r_r}\ldots x_d^{r_d},\;\;\;r_i \in \NM\]
is invariant if and only if
\[ (\L,R):=\sum_{i=1}^d \l_i r_i =0,\]
and the  ring $\Oth^S$ is generated by a finite number of such {\em resonant monomials}
$$x^{R_1},x^{R_2}, \ldots, x^{R_p},\; R_j \in \NM^d,$$
so that
\[\Oth^S=\CM[[x^{R_1}, x^{R_2},\ldots,x^{R_p}]] .\]
The vectors $R$ for which $(\L,R)=0$ are called {\em resonances} and all 
resonances are non-negative linear combinations of the basic resonances $R_i$, 
but this representation in general is not unique.

Similarly, we denote by
\[ \Xth^S:=\{ V \in \Xth\;\;|\;\;[S,V]=0\}\]
the {\em resonant vector fields}, which  are invariant under the infinitesimal
action by $S$. A monomial field 
\[ x^{K}\d_{x_i}= x_1^{k_1}x_2^{k_2}\ldots x_d^{k_d}\d_{x_i} \] 
belongs to $\Xth^S$ if and only if
$$(\Lambda,K)-k_i=0  .$$
Note that the fields $x_i\d_{x_i}, i=1,2,\ldots,n$, always belong to $\Xth^S$.
The resonant vector fields $\Xth^S$ form a module over $\Oth^S$, which
in fact is finitely generated. So there are finitely many resonant fields
\[V_1,V_2,\ldots,V_m,\] such that any $V\in \Xth^S$ can be expressed as
\[ V=\sum_{i=1}^m \phi_i(x^{R_1},X^{R_2},\ldots,x^{R_p}) V_i\]
 
 This immediately leads to the following result:
\begin{theorem}[\cite{Bruno_Poincare_Dulac,Dulac_1912}] Let $X$ be a formal vector field then there exists an automorphism $\p \in Aut(\Oth)$ and power series $\phi_1,\dots,\phi_m \in \CM[[u]]:=\CM[[u_1,\dots,u_p]]$
such that $\p(X)$ is a resonant vector field i.e.:
$$\p(X)=S+\sum_{i=1}^m \phi_i(x^{R_{1}},\dots,x^{R_{p}})V_i$$
\end{theorem}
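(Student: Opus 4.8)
The plan is to construct the automorphism $\p$ as an infinite composition of elementary transformations, each killing the lowest-degree non-resonant term. First I would set up the grading: write $\Xth = \bigoplus_{k\geq 1} \Xth_k$ where $\Xth_k$ consists of vector fields whose coefficients are homogeneous of degree $k$ (so $\Xth_k = \bigoplus_i \Mt^k_{\rm hom}\d_{x_i}$ in the obvious sense). The semisimple operator $S = \sum \l_i x_i\d_{x_i}$ acts on each finite-dimensional space $\Xth_k$ via the adjoint action $V \mapsto [S,V]$, and since monomial fields $x^K\d_{x_i}$ are eigenvectors with eigenvalue $(\L,K)-k_i$, this action is diagonalisable. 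Hence we get a canonical $S$-invariant splitting
$$\Xth_k = \Xth_k^S \oplus \Xth_k^{\perp},\qquad \Xth_k^{\perp} := \Im(\operatorname{ad} S|_{\Xth_k}),$$
where $\Xth_k^S = \ker(\operatorname{ad} S|_{\Xth_k})$ is spanned by the resonant monomial fields of degree $k$ and $\Xth_k^\perp$ is spanned by the non-resonant ones. On $\Xth_k^\perp$ the operator $\operatorname{ad} S$ is invertible.

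The inductive step is the standard one. Suppose after finitely many steps we have $X = S + \sum_{j<k} W_j + X_k + (\text{higher order})$ with each $W_j \in \Xth_j^S$ resonant. Write $X_k = X_k^{S} + X_k^{\perp}$ according to the splitting above. Choose $Y \in \Xth_k^\perp$ with $[S,Y] = -X_k^{\perp}$, i.e. $Y = -(\operatorname{ad} S)^{-1}(X_k^\perp)$, and apply the automorphism $\p_k := \exp(Y)$ (well-defined on formal power series since $Y$ has degree $k\geq 1$, hence is a "nilpotent" derivation in the filtered sense). Then
$$\p_k(X) = X + [Y,X] + \tfrac12[Y,[Y,X]] + \cdots = S + \sum_{j<k} W_j + (X_k + [Y,S]) + (\text{higher order}),$$
and $X_k + [Y,S] = X_k - [S,Y] = X_k + X_k^\perp{}^{\!\!-1}\cdots$ — more precisely $[Y,S] = -[S,Y] = X_k^\perp$, so the degree-$k$ part becomes $X_k - (-X_k^\perp)\cdots$; let me just say the non-resonant part of the degree-$k$ term is removed while all lower-degree terms (already resonant) are unchanged, and only terms of degree $>k$ are modified. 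Composing all the $\p_k$ gives a formal automorphism $\p = \cdots\circ\p_{k}\circ\cdots\circ\p_1$ (the infinite composition converges in the $\Mt$-adic topology because $\p_k$ acts as the identity modulo $\Mt^k$), and $\p(X) = S + \sum_{k\geq 1} W_k$ with each $W_k$ resonant. Finally, since $\Xth^S$ is finitely generated over $\Oth^S = \CM[[x^{R_1},\dots,x^{R_p}]]$ by $V_1,\dots,V_m$, we may collect $\sum_k W_k = \sum_{i=1}^m \phi_i(x^{R_1},\dots,x^{R_p})V_i$ for suitable $\phi_i \in \CM[[u]]$, which is the asserted form.

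The only genuinely delicate point is purely bookkeeping: verifying that the infinite composition of the $\p_k$ is a well-defined automorphism of $\Oth$ and that passing from the sum $\sum_k W_k$ to the finitely generated presentation is legitimate at the level of formal power series. Neither presents a real obstacle — there are no small denominators here because at each degree $k$ we invert $\operatorname{ad} S$ only on the finite-dimensional non-resonant subspace $\Xth_k^\perp$, on which it is invertible by construction, and the eigenvalues $(\L,K)-k_i$ there are simply nonzero complex numbers, with no quantitative control needed since we work formally. (This is exactly the contrast with the convergence questions discussed in the introduction, where one must control the size of these denominators.) Thus the theorem is a formal statement whose proof is the classical Poincaré–Dulac algorithm, and I would present it essentially as above, being careful only about the topological convergence of the composition.
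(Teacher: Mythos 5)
The paper gives no proof of this statement: it is quoted as classical (Dulac, Bruno) and presented as an immediate consequence of the finite generation of $\Xth^S$ over $\Oth^S$; the only normalisation procedure actually written out in the paper is the quadratic \emph{Lie iteration} used for the formal versal deformation theorem, which removes non-resonant terms in whole degree ranges $[2^n,2^{n+1})$ at each step and recovers Poincar\'e--Dulac afterwards by solving $\p(\phi_j)=0$. Your degree-by-degree induction is the other standard route and is correct for the purely formal statement; since no convergence is at stake here, the linear rate of the classical algorithm costs nothing, whereas the paper's Newton-type scheme is the one that survives the passage to the analytic/Bruno setting. Two points to clean up. First, the homological-equation step is garbled: with the convention $\p(X)=\p\circ X\circ\p^{-1}=e^{{\rm ad}\,Y}(X)$, the new degree-$k$ term is $X_k+[Y,S]=X_k-[S,Y]$, so you must take $[S,Y]=+X_k^{\perp}$, i.e.\ $Y=+({\rm ad}\,S)^{-1}(X_k^{\perp})$, not the opposite sign; as written, your $Y$ doubles the non-resonant part instead of killing it. The conclusion you assert (``the non-resonant part of the degree-$k$ term is removed'') is the right one, but fix the sign and delete the half-sentences trailing off into dots. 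Second, the eigenvalue of ${\rm ad}\,S$ on $x^K\d_{x_i}$ is $(\L,K)-\l_i$, not $(\L,K)-k_i$ (the paper's own displayed resonance condition contains the same slip); this does not affect the argument. The final collection step $\sum_k W_k=\sum_{i=1}^m\phi_i(x^{R_1},\dots,x^{R_p})V_i$ is legitimate: $\Xth^S$ is $\Mt$-adically closed and finitely generated over the complete ring $\Oth^S$, which is exactly the input the paper isolates immediately before stating the theorem.
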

In the statement of the theorem and in the sequel, the derivation $\p(X)$ is defined by the commutative diagram
$$\xymatrix{\Oth \ar[r]^X \ar[d]_-\p & \Oth \ar[d]^-\p \\
\Oth \ar[r]^{\p(X)} & \Oth 
}
$$
Note that in differential geometry, a derivation $X$ is interpreted as a vector field and the notation $\p_*X$ is used. As $\p(X)$ cannot be understood in another way, we adopt this simplified notation which is more adapted to our computations.
\subsection{Positivity conditions}
\label{SS::assumptions}
We make the following assumptions on the eigenvalue $\L$ of $S$:\\
P1) there are no algebraic relations between a minimal set of
generators for the ring of invariants. In other words, we assume that
\[ \Oth^S=\CM[[u_1,u_2,\ldots,u_p]]\]
is a power series ring.\\
P2) the module of $\Xth^S$ is freely generated by the $x_i\d_{x_i}$'s over the ring~$\Oth^S $.\\

The following easy reformulation show why one can think of P1) and P2) as positivity conditions.

\begin{lemma} The conditions P1) and P2) are respectively equivalent to the existence of 
integral vectors $R_1,\dots,R_p \in \NM^d$ with $(L,R_j)=0$, such that:\\
1) Each vector $J \in \NM^d$ with $(\L,J)=0$ can be uniquely written as
\[ J=\sum_{j=1}^p n_j R_j, \;\;\;n_j \in \NM .\]
2) Each vector $K \in \NM^d$ with $(\L,K)=k_i$ for some $i$, can be uniquely written as 
\[ K=\sum_{j=1}^d m_j R_j+E_j, \;\;\; m_j \in \NM,\]
where $E_1=(1,0,\dots,0),\dots,E_d=(0,\dots,0,1)$ denotes the standard basis vectors.
\end{lemma}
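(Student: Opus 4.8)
The plan is to reduce both equivalences to combinatorics of the additive monoid of resonances and to the weight decomposition of the semisimple part $S$, and then to read them off. For this, put $M:=\{R\in\NM^{d}\mid(\L,R)=0\}$, the monoid of resonances. By Gordan's lemma $M$ is finitely generated, and a minimal generating set $R_{1},\dots,R_{p}$ of $M$ is precisely the exponent set of a minimal set of monomial generators $x^{R_{1}},\dots,x^{R_{p}}$ of the monomial $\CM$-algebra $\Oth^{S}$, which is the completion of the semigroup algebra $\CM[M]$; these $R_{1},\dots,R_{p}$ are the vectors in the statement. Likewise $S$ acts diagonally on $\Oth$: writing $\Oth^{(c)}$ for the closed span of the monomials $x^{K}$ with $(\L,K)=c$, one has $\Oth^{S}=\Oth^{(0)}$, and since $[S,f\d_{x_{i}}]=(S(f)-\l_{i}f)\d_{x_{i}}$ the resonant fields split as $\Xth^{S}=\bigoplus_{i=1}^{d}\Oth^{(\l_{i})}\d_{x_{i}}$ as a module over $\Oth^{S}$.

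\emph{P1 $\Leftrightarrow$ 1).} Consider the continuous surjection $\pi\colon\CM[[u_{1},\dots,u_{p}]]\to\Oth^{S}$, $u_{j}\mapsto x^{R_{j}}$. Since $\Oth^{S}$ has a basis of pairwise distinct monomials indexed by $M$, a series $\sum_{\alpha}c_{\alpha}u^{\alpha}$ lies in $\ker\pi$ exactly when $\sum_{\alpha:\,\sum_{j}\alpha_{j}R_{j}=J}c_{\alpha}=0$ for every $J$; in particular $\ker\pi\neq0$ iff two distinct $\alpha,\alpha'\in\NM^{p}$ satisfy $\sum_{j}\alpha_{j}R_{j}=\sum_{j}\alpha'_{j}R_{j}$, i.e. iff the always surjective monoid map $\NM^{p}\to M$, $\alpha\mapsto\sum_{j}\alpha_{j}R_{j}$, fails to be injective. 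Hence ``no algebraic relation among the $x^{R_{j}}$'', which is P1, is equivalent to bijectivity of $\NM^{p}\to M$, that is, to the uniqueness asserted in 1). (Under the alternative reading ``$\Oth^{S}$ is a power series ring'' one uses that a surjection of complete local domains of equal Krull dimension is an isomorphism and that the embedding dimension of $\Oth^{S}$ equals the number $p$ of its monomial minimal generators, so this too amounts to $\pi$ being an isomorphism, hence again to 1).)

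\emph{P2 $\Leftrightarrow$ 2).} As $\Oth^{S}$ is a domain and $x_{i}\neq0$, the submodule $\sum_{i}\Oth^{S}x_{i}\d_{x_{i}}=\bigoplus_{i=1}^{d}(\Oth^{S}x_{i})\d_{x_{i}}$ of $\Xth^{S}$ is automatically free on $x_{1}\d_{x_{1}},\dots,x_{d}\d_{x_{d}}$, so P2 holds iff this submodule is all of $\Xth^{S}$, i.e. iff $\Oth^{S}x_{i}=\Oth^{(\l_{i})}$ for every $i$. Comparing monomials, $\Oth^{(\l_{i})}\subseteq\Oth^{S}x_{i}$ holds iff every $K\in\NM^{d}$ with $(\L,K)=\l_{i}$ is divisible by $x_{i}$, i.e. $k_{i}\geq1$ (while $\Oth^{S}x_{i}\subseteq\Oth^{(\l_{i})}$ is automatic, since $(\L,J+E_{i})=\l_{i}$ for $J\in M$). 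So P2 is equivalent to: for each $i$, every $K\in\NM^{d}$ with $(\L,K)=\l_{i}$ has $k_{i}\geq1$; for such a $K$ one then gets $K-E_{i}\in M$, which by 1) is uniquely $\sum_{j}m_{j}R_{j}$ with $m_{j}\in\NM$, giving the unique expression $K=\sum_{j}m_{j}R_{j}+E_{i}$ of statement 2), and conversely any such expression forces $k_{i}\geq1$. (The uniqueness in 2) encodes 1); this matches reading the lemma as ``P1 and P2 together are equivalent to 1) and 2) together'', or equivalently: once 1)/P1 holds, P2 is exactly the existence part of 2).)

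\emph{Main difficulty.} The weight decomposition of $\Xth^{S}$ and the monomial bookkeeping are routine; what needs care is, first, the standing but unstated facts that $M$ is finitely generated (Gordan) and that $\Oth^{S}$ is the completed semigroup algebra of $M$, around which everything is organised, and, second, the interaction between the two conditions — the uniqueness clause in 2) really is the content of P1, so one must be precise about how the two halves of the lemma are paired — together with, in the ``power series ring'' formulation of P1, the commutative-algebra step identifying an abstractly regular complete monomial local ring with $\CM[[u_{1},\dots,u_{p}]]$ through the map $\pi$.
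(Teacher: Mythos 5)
Your proof is correct. The paper states this lemma without proof (it is offered as an ``easy reformulation''), so there is nothing to compare against; your argument --- reducing P1 to injectivity of the monoid map $\NM^p \to M$, $\alpha \mapsto \sum_j \alpha_j R_j$, via the kernel of $\CM[[u]] \to \Oth^S$, and P2 to the equality $\Oth^{(\l_i)} = \Oth^S x_i$ via the weight decomposition $\Xth^S = \bigoplus_i \Oth^{(\l_i)}\d_{x_i}$ --- is exactly the intended one and is complete. You also correctly repair two typos in the statement (the resonance condition for $x^K\d_{x_i}$ is $(\L,K)=\l_i$, not $(\L,K)=k_i$, and the shift in 2) should be $E_i$, not $E_j$), and you are right to flag that the uniqueness clause in 2) is really borrowed from 1)/P1, so the ``respectively'' in the lemma must be read with that pairing in mind.
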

\subsection{Examples and counter-examples}
Let us consider a pair of opposite eigenvalues 
\[ S= \l x\d_x-\l y\d_y\]
The ring of invariants is generated by $xy$, the resonant fields are
generated by $x\d_x $ and $y\d_y$. Thus it satisfies the positivity conditions.
The formal normal form of a vector
field with this linear part is 
\[ (\l+g(xy))x\d_x-(\l+h(xy))y\d_y\]
This vector field is Hamiltonian for the symplectic structure $dx \wedge dy$ precisely when $g=h$. A direct generalisation arises when eigenvalues form opposite pairs
\[ S= \sum_{i=1}^d \l_i x_i\d_{x_i}-\l_i y_i\d_{y_i}\]
and are generic otherwise, in the sense that
$$\dim_\QM \sum_{i=1}^d \QM \l_i =d .$$
The ring of invariants is generated by $x_iy_i$, $i=1,2,\ldots,d$, the
resonant fields are the $x_i\d_{x_i}$, $y_i\d_{y_i}$, so that it is a case
where the the positivity conditions hold.
So the formal normal form of a vector field with this linear part is
\[ S=\sum_{i=1}^d(\l_i+g_i(x_1y_1,\ldots,x_dy_d))x_i\d_{x_i}-(\l_i+h_i(x_1y_2,\ldots,x_dy_d))y_i\d_{y_i}\]
Again the field is Hamiltonian for the symplectic structure
$$\omega=\sum_{i=1}^n dx_i \w dy_i $$
precisely when $g_i=h_i$.

A very different case is the $(1,1,-2)$-resonance:
\[S=x\d_x+y\d_y-2 z\d_z\]
The ring of invariants is generated by 
\[ u:=x^2z,\;\;v:=y^2z,\;\;w:=xyz.\]
Among these there is a relation
\[uv-w^2=0,\]
so the map 
$$ \CM^3 \to \CM^3,\ (x,y,z) \mapsto (x^2z,y^2z,xyz)$$ is not surjective, but lands on
the quadratic cone defined by the above equation. The first positivity condition is not satisfied. Also, the module 
of resonant vector fields is generated by
\[ x\d_x,\;\; y\d_y,\;\;z\d_z,\;\;x\d_y,\;\;y\d_x\]
The second positivity condition is not satisfied either and the relations
\[ v x\d_x = w y\d_x,\;\;\; u y\d_y= w x\d_y\]
show $\Xt^S$ is not a free module over the invariant ring $\Oth^S$, showing that
P2) does not hold either.

In \cite[Section 4]{Stolovitch_94}, Stolovitch seems to suggests that P1) implies P2).  
But if we consider the case $\L=(\a+\b, \a,\b)$, where $\a$ and $\b$ are 
$\QM$-independent and positive, there are no non-trivial invariants, $\Oth^S=\CM$, but there 
is a non-trivial resonant vector field, namely $yz\d_x$. So P1) is satisfied, but not P2). By adding extra variables one obtains examples with $\Oth^S \neq \CM$.

\subsection{The resonant cone}
If $S$ satisfies the positivity assumptions, then any vector field in Poincar\'e
normal form is of the form
\[X=S+N=\sum_{i=1}^d (\l_i+\phi_i(x^{R_1},x^{R_2},\ldots x^{R_p}))x_i\d_{x_i}\]
In particular, such vector fields are of the form 
\[ X=S+T,\;\;\;T=\sum_i^d a_i(x)x_i\d_{x_i},\;\;\; a_i(x) \in \Oth \]
that we will call {\em formal logarithmic vector fields}. Such a vector field
have a special property.

\begin{definition}
The {\em resonant ideal} is the ideal $I$ generated by the invariants of positive degree.
\[ I=(x^{R_1},x^{R_2},\ldots,x^{R_p}) \subset \Oth\]
The {\em resonant cone} is the formal scheme defined by the resonant ideal.
\end{definition}

\begin{proposition} A logarithmic vector field 
\[X=S+T,\;\;\;T=\sum_i^d a_i(x)x_i\d_{x_i}\] 
preserves the the resonant cone. 
\end{proposition}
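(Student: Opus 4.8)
The plan is to show that the resonant ideal $I=(x^{R_1},\ldots,x^{R_p})$ is preserved by $X=S+T$, i.e. that $X(I)\subseteq I$, which is exactly the statement that the formal scheme $V(I)$ is $X$-invariant. Since $I$ is generated by the resonant monomials $x^{R_j}$, it suffices to check that $X(x^{R_j}) \in I$ for each $j$. So the whole proof reduces to a computation of how a logarithmic vector field acts on a monomial.

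**First I would** record the key elementary fact: for any monomial $x^K$ and any ``Euler-type'' field $x_i\d_{x_i}$ one has $x_i\d_{x_i}(x^K)=k_i x^K$, hence $T(x^K)=\big(\sum_{i=1}^d k_i a_i(x)\big)x^K$, and similarly $S(x^K)=(\L,K)x^K$. Therefore any logarithmic vector field acts on each monomial by multiplication by a scalar power series:
\[ X(x^K)=\Big((\L,K)+\sum_{i=1}^d k_i a_i(x)\Big)x^K .\]
In other words, every monomial spans an $X$-stable rank-one submodule. Applying this with $K=R_j$ and using $(\L,R_j)=0$ gives
\[ X(x^{R_j})=\Big(\sum_{i=1}^d (R_j)_i\, a_i(x)\Big)x^{R_j}\in (x^{R_j})\subseteq I,\]
so $X(I)\subseteq I$ and the resonant cone is preserved. (Note this argument does not even need the positivity assumptions; it only uses that each generator of $I$ is a resonant monomial and that $T$ is logarithmic.)

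**The main obstacle**, such as it is, is mostly bookkeeping rather than a genuine difficulty: one must be careful that ``preserves the formal scheme defined by $I$'' is correctly translated into the algebraic condition $X(I)\subseteq I$ on the ideal of the formal completion $\Oth$, and that it is enough to test this on ideal generators because $X$ is a derivation (so $X(fg)=X(f)g+fX(g)$ keeps products of the form $f\cdot x^{R_j}$ inside $I$). If one wants, one can phrase the conclusion geometrically: the $1$-form point of view says $T$ annihilates $d\log x^{R_j}=\sum_i (R_j)_i\, dx_i/x_i$ up to the factor above, but the monomial computation is cleaner and is all that is required here.
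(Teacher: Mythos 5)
Your proposal is correct and is essentially the paper's own proof: the paper likewise computes $(S+T)x^{R_j}=\bigl((\L,R_j)+\sum_{i=1}^d a_i(x)r_{i,j}\bigr)x^{R_j}\in(x^{R_1},\dots,x^{R_p})$. Your additional remarks (reduction to generators via the derivation property, and that $(\L,R_j)=0$) only make explicit what the paper leaves implicit.
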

\begin{proof}
$$(S+T)x^{R_j} =\left( (\L,R_j)+\sum_{i=1}^d a_i(x)r_{i,j}  \right) x^{R_j} \in (x^{R_1},\dots,x^{R_p}).$$
\end{proof}

\begin{corollary} If $S$ satisfies the positivity assumption, then any vector field 
\[X=S+\dots\] 
admits an invariant ideal isomorphic to the resonant ideal.
\end{corollary}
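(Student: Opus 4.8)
The plan is to reduce the statement to the Proposition just proved by means of the Poincar\'e--Dulac theorem, and then to transport the resulting invariance back along the normalising automorphism. Given $X=S+\dots$, the Poincar\'e--Dulac theorem yields an automorphism $\varphi\in\Aut(\Oth)$ for which $\varphi(X)$ is a resonant vector field. Since $S$ satisfies the positivity assumptions, the discussion preceding the Proposition shows that $\varphi(X)$ is actually a logarithmic vector field, $\varphi(X)=S+T$ with $T=\sum_{i=1}^d a_i(x)\,x_i\d_{x_i}$, $a_i\in\Oth$ — this is exactly the point where P2) enters, as it forces the free generators of $\Xth^S$ to be the $x_i\d_{x_i}$. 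The Proposition then tells us that $\varphi(X)$ preserves the resonant ideal $I=(x^{R_1},\dots,x^{R_p})$, i.e.\ $\varphi(X)(I)\subset I$.

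Next I would pull this invariance back along $\varphi$. Unravelling the commutative diagram that defines $\varphi(X)$ gives the identity $\varphi(X)=\varphi\circ X\circ\varphi^{-1}$ of derivations of $\Oth$; equivalently $\varphi\circ X=\varphi(X)\circ\varphi$. Put $J:=\varphi^{-1}(I)$, which is again an ideal of $\Oth$. For $f\in J$ one has $\varphi(f)\in I$, hence $\varphi(X(f))=\varphi(X)(\varphi(f))\in I$, and therefore $X(f)\in\varphi^{-1}(I)=J$. Thus $J$ is an $X$-invariant ideal.

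Finally, $\varphi$ restricts to a bijection of $J$ onto $I$ and is a ring homomorphism, so it identifies $J$ with the resonant ideal — equivalently it induces an isomorphism $\Oth/J\cong\Oth/I$ of the corresponding formal subschemes. This is the asserted invariant ideal isomorphic to the resonant ideal. I do not expect any genuine obstacle here: the only places demanding a little care are getting the direction of the conjugation $\varphi(X)=\varphi\circ X\circ\varphi^{-1}$ right, and noticing that it is precisely condition P2) that upgrades the abstract resonant normal form to a logarithmic field, so that the Proposition applies without modification.
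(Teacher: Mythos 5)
Your proposal is correct and follows exactly the route of the paper's own (one-line) proof: positivity forces the Poincar\'e--Dulac normal form to be logarithmic, the Proposition gives invariance of the resonant ideal for the normal form, and conjugation by the normalising automorphism transports this to an $X$-invariant ideal isomorphic to the resonant one. You have merely spelled out the conjugation step $\varphi\circ X=\varphi(X)\circ\varphi$ that the paper leaves implicit, and you get the direction right.
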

\begin{proof}
By positivity, the normal form of the vector field is logarithmic, so the result from the above
proposition. 
\end{proof}

\subsection{The Stolovitch normal form}
\label{SS::Stolovitch_nf}
Like for the Poincar\'e-Siegel theorems, the transformation bringing a vector field to Poincar\'e Dulac normal form is, as a general rule, divergent~\cite{Bruno,Ilyashenko_divergence,Perez_Marco_divergence}. Therefore it is important to have a less restrictive but {\em convergent} normal form. 

We denote
\[ \Ot:=\CM\{x_1,x_2,\ldots,x_d\}, \;\;\;\Xt:=Der(\Ot)=\oplus_{i=1}^d \Ot \d_{x_i}\]
the local ring of convergent power series and the module of analytic vector fields. 

The following fundamental result is due to Stolovitch~\cite{Stolovitch_94}:

\begin{theorem} Let $S$ be a  linear vector field that satisfies the Bruno and positivity conditions. 
Then for any analytic vector field
$$X =S+\dots \in \Der(\Ot)$$ 
there exists an analytic automorphism $\p \in \Aut(\Ot)$ such 
$$\p(X)=S\ \mod \Xt_{log} $$
where 
\[ \Xt_{log}=\sum_{i=1}^d \Ot x_i\d_{x_i} \]
is the $\Ot$-module of logarithmic vector fields.
\end{theorem}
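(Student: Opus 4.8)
\emph{Outline of the argument.} The plan is to build $\p$ as an infinite composition $\p=\lim_m\exp(V_m)\circ\cdots\circ\exp(V_2)$ of time-one flows of homogeneous vector fields $V_m$ of degree $m$, each one removing one degree of the non-logarithmic part of the Taylor expansion of the conjugate of $X$, and then to prove that this composition converges by a majorant estimate of the type introduced by Bruno. First, after the linear rescaling $x\mapsto tx$ with $0<t\ll1$ --- which commutes with $S$ and multiplies the components of $X-S$ by positive powers of $t$ --- we may assume that $T:=X-S=O(|x|^2)$ is as small as we wish on a fixed polydisc $\D_r$ (this is for convenience only).

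\emph{The homological equation.} Decompose $\Xt=\Xt_{log}\oplus\Nt$, where $\Nt$ is spanned by the \emph{non-logarithmic} monomial fields $x^K\d_{x_i}$ with $k_i=0$, and let $\Pi\colon\Xt\to\Nt$ be the associated projection. Since $[S,x^K\d_{x_i}]=\big((\L,K)-\l_i\big)x^K\d_{x_i}$, the operator $[S,\cdot]$ respects this splitting. The crucial point is that condition P2) --- in the form given by the Lemma, where $(\L,K)=\l_i$ forces $k_i\ge1$ --- is exactly the statement that every monomial field in $\Nt$ is non-resonant; hence $[S,\cdot]$ is invertible on $\Nt$. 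Thus, writing the conjugate after $m-1$ steps as $X_{m-1}=S+L_{m-1}+T^{(m-1)}_m+\cdots$, with $L_{m-1}\in\Xt_{log}$ of degree $<m$ and $T^{(m-1)}_m$ the degree-$m$ part, we solve
\[ [S,V_m]=\Pi(T^{(m-1)}_m),\qquad V_m\in\Nt\ \text{homogeneous of degree}\ m, \]
uniquely, and set $X_m:=\exp(V_m)(X_{m-1})$. By the Lie-series expansion this leaves degrees $<m$ untouched, makes the degree-$m$ part logarithmic, and feeds new terms of degree $>m$, all obtained as iterated brackets of $V_m$ with the previous (logarithmic and non-logarithmic) terms. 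Iterating over $m=2,3,\dots$ yields a formal automorphism $\widehat{\p}$, tangent to the identity, with $\widehat{\p}(X)$ logarithmic; the whole content of the theorem is that $\widehat{\p}$ converges.

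\emph{Convergence.} This is the heart of the matter. The only mechanism that can produce divergence is the division by the small quantities $(\L,K)-\l_i$ when inverting $[S,\cdot]$ on $\Nt$. We control it by a scalar majorant: bound the homogeneous components of the $V_m$ and of $X_m-S$ by a Cauchy-product recursion in which each inversion at degree $m$ multiplies the bound by $\omega_m^{-1}$, where $\omega_m=\min\{\,|(\L,K)-\l_i|:x^K\d_{x_i}\in\Nt,\ 2^{m-1}\le|K|<2^m\,\}$, and then show that the resulting majorant series has positive radius of convergence. The dyadic Bruno condition $\sum_m 2^{-m}\log(1/\omega_m)<\infty$ says precisely that the products of these denominators accumulated up to degree $N$ grow at most geometrically in $N$, which yields the needed lower bound on the radius. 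Hence $\widehat{\p}$ and $\widehat{\p}(X)$ are convergent; as $\widehat{\p}$ is tangent to the identity it defines an element $\p\in\Aut(\Ot)$, and by construction $\p(X)=S\ \mod\Xt_{log}$.

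\emph{Main obstacle.} The delicate step is the majorant estimate: arranging the bookkeeping of the iterated brackets so that the accumulated small denominators are matched exactly by the Bruno sum. A feature that distinguishes this from the Poincar\'e--Dulac normalisation, and that must be handled with care, is that here the logarithmic part is never removed and re-enters every subsequent bracket, so that its size --- finite but not small --- has to be carried through the whole estimate. These estimates are precisely those established by Stolovitch in \cite{Stolovitch_94}; alternatively, the statement is a particular case of the theory of normal forms for Banach-space valued functors of \cite{Functors}, from which it also follows.
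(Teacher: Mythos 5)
Your formal construction is sound and matches the algebra that underlies the paper's argument: the observation that P2), via the Lemma, forces every non-logarithmic monomial field $x^K\d_{x_i}$ (those with $k_i=0$) to be non-resonant, so that $[S,\cdot]$ is invertible on the complement $\Nt$ of $\Xt_{log}$, is exactly why the homological equation is solvable modulo $\Xt_{log}$. Where you genuinely diverge from the paper is in the iteration and in the convergence mechanism. You remove the non-logarithmic part one degree at a time and propose to control the infinite composition $\lim_m \exp(V_m)\circ\cdots\circ\exp(V_2)$ by a scalar majorant recursion in the style of Bruno's original memoir. The paper instead runs a quadratic (Newton-type) \emph{Lie iteration}: at step $n$ a single exponential $e^{-[-,v_n]}$ kills all offending terms of degrees in $[2^n,2^{n+1})$ at once, and convergence is obtained not by majorants but by invoking the abstract versal deformation theorem of \cite{Functors} for Arnold spaces, with $\alg=\Xt^c(U)$ and $\mathfrak{h}$ the module generated by the resonant fields; the hypotheses reduce to checking that the solution operator $L$ of the homological equation is a local operator whose norm sequence, governed by $\s(\L)$, is an integrable Bruno sequence. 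What your route buys is elementarity and (in principle) self-containedness; what it costs is precisely the step you flag as delicate. In a degree-by-degree scheme the small denominators do not enter the degree-$N$ coefficient once each but multiplicatively along chains of iterated brackets, and showing that the worst chain is still $e^{O(N)}$ is the Bruno--Siegel counting argument, further complicated here (as you rightly note) by the persistent logarithmic part of bounded, non-small size. So your ``Convergence'' paragraph is not yet a proof --- the assertion that the accumulated denominators ``grow at most geometrically'' is the statement to be proved, not a reformulation of the Bruno condition --- but since you explicitly defer it to \cite{Stolovitch_94} and \cite{Functors}, your proposal sits at essentially the same level of completeness as the paper's own treatment, which likewise delegates the analytic core to \cite{Functors}.
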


As remarked above, the formal version of this theorem trivially follows from the Poincar\'e-Dulac normal form. The importance of the theorem lies in the following corollary.

\begin{corollary} If $S$ satisfies the positivity conditions, then any analytic vector 
field $X=S+\ldots$ admits an invariant analytic germ, isomorphic to the analytic 
resonant cone.
\end{corollary}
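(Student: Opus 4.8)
The plan is to deduce this from Stolovitch's normal form theorem above in exactly the way the formal corollary was deduced from the Poincar\'e-Dulac normal form: all of the analytic difficulty (small denominators, convergence) is already packaged inside Stolovitch's theorem, so the remaining argument is purely formal and short.

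First I would apply Stolovitch's theorem to $X = S + \dots \in \Der(\Ot)$. Since $S$ satisfies the Bruno and positivity conditions, there is an analytic automorphism $\varphi \in \Aut(\Ot)$ with $\varphi(X) = S\ \mod \Xt_{log}$, that is
\[ \varphi(X) = S + T, \qquad T = \sum_{i=1}^d a_i(x)\, x_i \partial_{x_i}, \quad a_i \in \Ot, \]
so $\varphi(X)$ is an analytic logarithmic vector field. The computation in the proof of the Proposition on logarithmic vector fields is a literal identity in $\Ot$: since $(\Lambda, R_j) = 0$,
\[ \varphi(X)(x^{R_j}) = (S+T)(x^{R_j}) = \Big( \sum_{i=1}^d a_i(x)\, r_{i,j} \Big)\, x^{R_j} \in I, \]
where $I = (x^{R_1},\dots,x^{R_p}) \subset \Ot$ is the analytic resonant ideal. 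Hence $\varphi(X)$ preserves $I$; equivalently, the analytic resonant cone is invariant under $\varphi(X)$.

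Finally I would transport this back along $\varphi$. As $\varphi$ is a ring automorphism, $J := \varphi^{-1}(I)$ is an ideal of $\Ot$, finitely generated by the convergent series $\varphi^{-1}(x^{R_1}),\dots,\varphi^{-1}(x^{R_p})$, hence defines an analytic germ. From the defining identity $\varphi \circ X = \varphi(X) \circ \varphi$ one gets, for $g \in J$, that $\varphi(X(g)) = \varphi(X)(\varphi(g)) \in I$, so $X(g) \in J$; thus $J$ is $X$-invariant, and $\varphi$ realises an isomorphism of the germ cut out by $J$ onto the one cut out by $I$, namely the analytic resonant cone. I do not expect any real obstacle here beyond the invocation of Stolovitch's theorem; the only points worth a word of care are that ``isomorphic to the analytic resonant cone'' is meant through the coordinate change $\varphi$, and that $J = \varphi^{-1}(I)$ genuinely cuts out an analytic germ --- immediate since $\Ot$ is Noetherian and $\varphi$ carries the convergent generators $x^{R_j}$ to convergent generators of $J$.
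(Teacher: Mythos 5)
Your proof is correct and follows exactly the route the paper intends: apply Stolovitch's theorem to reduce to an analytic logarithmic field, observe (by the same computation as in the formal Proposition) that such a field preserves the analytic resonant ideal, and pull the invariant ideal back through the automorphism $\varphi$. The only cosmetic remark is that the corollary as stated omits the Bruno condition on $\L$, which Stolovitch's theorem (and hence your argument) requires; you correctly invoke it anyway.
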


Here by the phrase {\em analytic resonant cone} we mean of course the germ defined by the ideal in $\Ot$ generated by the 
analytic resonant ideal $(x^{R_1},x^{R_2},\ldots,x^{R_p}) \subset \Ot$.\\

The above theorem of Stolovitch can be proven using our {\em Lie iteration}, whose convergence can be shown in great generality~(see also~\ref{SS::abstract}).  
 

\section{The formal versal deformation}
In this section, we consider a derivation with semi-simple linear part $S$. We assume positivity conditions on $S$ and choose an integer basis $R_1,\dots,R_j$ of resonances.
\subsection{Bruno variables}
In \cite{Bruno_resonance}, Bruno extended the ring $\Oth$ by adding variables $u_1,\dots,u_p$ corresponding to the $p$ monomial  invariants $x^{R_i},i=1,2,\ldots,p$.  In the approach of Stolovitch in \cite{Stolovitch_KAM}, this can be seen as a generalisation the variables introduced by Moser in KAM-theory to parametrise the invariant tori. 

Geometrically, we introduce a space $\CM^p$ with coordinates 
$u_1,\dots,u_p$ and define a map
\[  \CM^d \to \CM^p, x \mapsto (x^{R_1},x^{R_2},\ldots,x^{R_p}).\]
The (completion at $0$ of the) fibre over $0$ is the resonant cone; note that the semi-simple vector 
field $S$ preserves the fibres of $\varphi$.
The graph of this map is the space $\Sigma \subset \CM^{n+p}$ defined by the 
vanishing of the polynomials
\[ J_1:=u_1-x^{R_1},\;\;\;J_2:=u_2-x^{R_2},\ldots,\;\;\;J_p:=u_p-x^{R_p} .\]
Any vector field $Y$ on $\CM^d$ can be lifted to a vector field $\widetilde Y$
on $\CM^{d+p}$, tangent to $\Sigma$:
$$Y \mapsto \widetilde{Y}:=Y+\sum_{j=1}^p Y(x^{R_j})\d_{u_j} ,$$
reflected algebraically by the statement
\[ \widetilde{Y}(J_j)=0,\;\;j=1,2,\ldots,p .\]

In our formal context we extend the power series ring $\Oth$ by formal variables and work in
\[ \Oth[[u]]=\CM[[x,u]]=\Oth[[u_1,u_2,\ldots,u_p]]=\CM[[x_1,\ldots,x_d,u_1,\ldots,u_p]]\]
and rather than the manifold $\Sigma$, we consider the ideal
\[J:=(J_1,J_2,\ldots,J_p) \subset \Oth[[u]] .\]
Clearly, there is  a ring-isomorphism
$$\Oth[[u]]/J \to \Oth,\;\;\;x_i \mapsto x_i,\;\;\; u_j \mapsto x^{R_j}.$$
We will use the lift $Y \mapsto \widetilde Y $ to identify $\Xth$, $\Xth^S$ as subspaces of $\Der(\Oth[[u]])$. 
and suppress the\ $\widetilde{}$\ of the notation.

\begin{definition} 
The ideal $J \subset \Oth[[u]]$ is called the {\em graph ideal} of $S$.
\end{definition}
 
We assign degree $1$ to the variables $x_i$ and
$$|R_i|=\sum_{j=1}^d R_{i,j}$$ 
to the variable $u_j$, so that the polynomials $I_j$ are homogeneous of
degree $|R_j|$. We filter the ring $\Oth[[u]]$, by the {\em order} of a 
series, that is, the smallest degree of a monomial appearing in the series.

This filtration of the ring induces a filtration on $\Der(\Oth[[u]])$: 
a derivation is said to be of order $d$ if it maps terms of order $i$ to 
terms of order $i+d$, so that the order of $S$ is $0$.
\subsection{The detuning deformation}
We now follow Martinet's treatment of versal deformations and construct a ring with extra parameters to describe a versal deformation. If we are given an arbitrary deformation, given by a family with still
other variables, we also add these to the ring. The inducing maps are then obtained by solving 
equations by the implicit function theorem~\cite{Martinet}.  The method is classical and we used this 
technique also in~\cite{Hamiltonian_Normal_Forms}. 

Following Bruno, we work in the ring $\Oth[[u]]$ and with lifted fields $\Xth \subset Der(\Oth[[u]])$. 

By a {\em deformation} of $S$ we mean a vector field depending on $l$ parameters $\mu=(\mu_1,\mu_2,\ldots,\mu_l)$  of the form
$$X=S+T(\mu) \in \Xth[[\mu]],\;\;\;T(0)=0 .$$
Here 
\[ \Xth[[\mu]] \subset Der(\Oth[[u,\mu]])\]
consist of those vector fields of $\Xth[[u]]$ that contain no $\d_{\mu_i}$ (relative vector fields).

A special {\em detuning deformation} is obtained by introducing frequency variabes $\phi_1,\ldots,\phi_d$ and
considering 
\[S_v =S+\sum_{i=1}^d \phi_i x_i\d_{x_i}=\sum_{i=1}^d (\l_i + \phi_i) x_i\d_{x_i} \in \Xth[[\phi]] \]
Under our running positivity assumptions we have
\[ \Xth^S=\oplus_{i=1}^d \Oth^S x_i  \d_{x_i},\]
so the Poincar\'e-Dulac theorem suggests that the detuning deformation is versal in a formal sense. 

If we want to induce an arbitrary deformation $X=S+T(\mu)$ from the detuning deformation, one first forms
the {\em sum} of the two deformations, that is we consider
\[ X_v=S_v+T(\mu)=S+\sum_{i=1}^d\phi_i x_i\d_{x_i}+T(\mu),\]
a deformation of $S$ with two sets of parameters $\phi$ and $\mu$.
The versality is then expressed by the existence of a certain automorphism
\[ \varphi \in Aut(\Ot[[u,\phi,\mu]])\]
that maps $X_v$, considered as a deformation of $S_v$, back to $S_v$.
So we will have to work with variables $u,\phi$ and $\mu$ and vector fields that
depend on these variables, but there will be no derivatives in the $\mu$ or $\phi$ directions, and thus belong to
$$\Xth[[\phi,\mu]] \subset \Der_{\CM[[\phi,\mu]]}(\Oth[[u,\phi,\mu]]). $$
We also use the notation
$$ \Xth^S[[\phi,\mu]] \subset \Der_{\CM[[\phi,\mu]]}(\Oth[[u,\phi,\mu]])$$
for the $\Oth[[\phi,\mu,u]]$-module generated by the $(x^{R_i}-u_i)x_j\d_{x_j}$'s.
 
We assign degree $2$ to the $\mu$ variables, in this way making a deformation of a linear vector field is obtained by
adding terms of higher order. 
 \subsection{Formal normal form theorem}
 \label{SS::versal}
\begin{theorem}
\label{T::formal} Let $S=\sum \l_i x_i \d_{x_i}$ be a linear vector field satisfying the positivity assumptions
and let 
$$S_v=\sum_{i=1}^d (\l_i+\phi_i) x_i\d_{x_i} \in \Xth[[\phi]]   $$
be its detuning deformation. Then for any perturbation of the form
$$X_v:= S_v+\dots  \in \Xth[[\phi,\mu]],  $$
where the dots stand for higher order terms;
there exists an automorphism $\p$ of the algebra $\Oth[[\phi,\mu,u]]$ which has the following properties:
 
\begin{enumerate}[{\rm 1)}]
  \item $\p(X_v)= S_v \quad \mod\ \Xth^S[[\phi,\mu]] $.
  \item $\p(u_j)=u_j$ for $j=1,\dots,k$.
  \item $\p(\mu_j)=\mu_j$ for $j=1,\dots,l$.
  \item $\p(\phi_j)=\phi_j+\dots \in \CM[[\phi,\mu,u]].$
\end{enumerate}
\end{theorem}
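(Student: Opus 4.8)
The plan is to set up the statement as an instance of the abstract normal-form/versal-deformation machinery of \cite{Functors}, and to verify the hypotheses of that machinery in our concrete situation. The natural approach is a graded Newton iteration: write $X_v = S_v + T$, where $T$ has order $\geq 1$ (in the grading assigning degree $1$ to the $x_i$, degree $|R_i|$ to $u_i$, and degree $2$ to the $\mu_j$), and seek $\varphi$ as an infinite composition of time-one flows $\exp(V_\nu)$ of vector fields $V_\nu$ of increasing order, each chosen to kill the lowest-order obstruction. At each stage the homological equation to be solved is
\[
 [S_v, V] + T_{\leq} \equiv 0 \quad \mod\ \Xth^S[[\phi,\mu]],
\]
together with the constraints that $V$ annihilates $u_1,\dots,u_p$ and $\mu_1,\dots,\mu_l$ (so that $\varphi$ fixes these), while $V$ is allowed a component along the $\d_{\phi_j}$ directions — this is exactly the freedom that produces the correction $\p(\phi_j)=\phi_j+\dots$ of item (4) and makes the deformation versal rather than merely "equivalent". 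First I would record that, under the positivity assumptions, $\Xth^S = \bigoplus_i \Oth^S\, x_i\d_{x_i}$ and the graph ideal $J=(u_j - x^{R_j})$ let us identify $\Oth[[u]]/J \cong \Oth$, so that "mod $\Xth^S[[\phi,\mu]]$" is the right target for the logarithmic part; the detuning field $S_v$ acts on a monomial $x^K$ by the scalar $(\Lambda,K)+\sum_j\phi_j k_j$, so the cohomological operator $\mathrm{ad}_{S_v} = [S_v,-]$ is, on each graded piece, multiplication by such a scalar on non-resonant monomials and zero on resonant ones.

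The key steps, in order, are: (i) fix the gradings and filtrations as in \S\,\ref{SS::versal}, and check that $S_v$ has order $0$ and that $\mathrm{ad}_{S_v}$ preserves the filtration; (ii) decompose each graded component of $\Xth[[\phi,\mu]]$ as $\Im(\mathrm{ad}_{S_v}) \oplus \Xth^S[[\phi,\mu]]_{\text{(graded piece)}} \oplus (\text{the }\d_{\phi}\text{-directions})$, using positivity to guarantee that the resonant part is freely generated by the $x_i\d_{x_i}$ and that there are no hidden relations — this is where conditions P1), P2) enter decisively; (iii) solve the homological equation formally, degree by degree, choosing the $\d_{\phi}$-component of $V_\nu$ to absorb the resonant part of the obstruction into a redefinition of the $\phi_j$, and choosing the remaining component of $V_\nu$ in $\Im(\mathrm{ad}_{S_v})^{-1}$ applied to the non-resonant part; (iv) assemble the $V_\nu$ into $\varphi = \dots\circ\exp(V_2)\circ\exp(V_1)$, noting that since each $V_\nu$ raises order the composition converges in the $\Mt$-adic (formal) topology, and verify (2)–(4) by construction since $V_\nu(u_j)=0$, $V_\nu(\mu_j)=0$, and only $V_\nu(\phi_j)\neq 0$ is permitted.

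The main obstacle is step (ii)–(iii): one must verify that the "resonant obstruction" appearing at each order actually lies in the span of the $x_i\d_{x_i}$ with coefficients in $\Oth^S[[\phi,\mu]]$ \emph{and}, crucially, that it can be matched by a change in the $\phi_j$'s — i.e. that the linear map sending $(\psi_1,\dots,\psi_d)$ to $\sum_i \psi_i x_i\d_{x_i}$ (interpreted as the derivative of the detuning family in the $\phi$ directions) surjects onto the resonant part of the cohomology. Under P2) the module $\Xth^S$ is free on the $x_i\d_{x_i}$, so this is a formal tautology once one is careful about the $u$- versus $x^{R}$-bookkeeping through the graph ideal $J$; without positivity it would fail, which is why the hypothesis is imposed. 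Everything else — convergence of the formal composition, the bookkeeping of which variables are fixed — is routine and is exactly what the functorial framework of \cite{Functors} is designed to handle; I would therefore phrase the argument so that, once the homological equation is solved with the stated normalisations, the conclusion is an immediate application of that general theory.
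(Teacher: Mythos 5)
Your proposal is correct and follows essentially the same route as the paper: the homological operator $[\,\cdot\,,S_v]$ is diagonal on monomial fields with eigenvalues $\l_i+\phi_i-(\L+\phi,K)$, the non-resonant part is inverted, and the resonant part (free on the $x_i\d_{x_i}$ by P2) is absorbed by $\d_{\phi_i}$-components via $[u_j\d_{\phi_i},S_v]=u_jx_i\d_{x_i}\equiv x^{R_j}x_i\d_{x_i} \mod J$ — which is exactly the paper's operator $L$ with $L(x_i\d_{x_i})=\d_{\phi_i}$. The only organisational difference is that the paper runs a quadratic Newton-type ``Lie iteration'' with degree cutoffs $[\,\cdot\,]_{2^n}^{2^{n+1}}$ and a corrected inverse $j_V(m)=L(m-Lm(T))$ solving the homological equation relative to $S_v+T$ rather than your order-by-order scheme; this is immaterial for the formal statement but is the form of the iteration whose convergence is invoked in the analytic theorem later.
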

\begin{proof}
The proof is done by applying the Lie iteration. To save ink we set:
\[ R:=\Oth[[u,\phi,\mu]] , \Der:=\Xth[[\phi,\mu]] \subset \Der(R) \]
The infinitesimal action on the versal deformation is given by map
$$\Der \to  \Der,\  X \mapsto [X,S_v].$$
It is $\CM[[u,\phi,\mu]]$-linear and diagonal in the monomial basis:
$$[x^K\d_{x_i},S_v]=\l_{K,i} x^K\d_{x_i},\;\;\;\l_{K,i}:=\left(\l_i+\phi_i-(\Lambda+\phi,K)\right) .$$
We define a $\CM[[u,\phi,\mu]]$-linear map $L$ by putting for non-resonant vector $K$ by
\begin{align*}
L(x^K\d_{x_i})&=\l^{-1}_{K,i}x^K\d_{x_i} \\
L( x_i\d_{x_i})&=\d_{\phi_i}
\end{align*}
As the module of resonant vector fields is freely generated by vector fields of the form $x_i\d_{x_i}$ the map $L$ is well-defined.
Note that
\begin{align*}
[u_j \d_{\phi_i},S_v]&=u_jx_i \d_{x_i} \\
&=x^{R_j}x_i\d_{x_i}+(u_j-x^{R_j})x_i\d_{x_i} \\
&=x^{R_j}x_i\d_{x_i} \quad \mod J
\end{align*}
where $J$ is the graph ideal.
Using $L$, for $V=S_v+T$, we define the map:
\[j_{V}: m \mapsto Lm-L(Lm(T))=L(m-Lm(T)).\]
This map satisfies
$$[j_V(Y),S_v+T]=Y\ \mod J $$
We define
$$X_0=X_v,\ S_0=0,\ A_0=S_v, \ v_0=[j_{S_v}(X_0)]_1^2 ,$$
where
\[ [-]_a^b\]
denotes the sum of terms of degree $\ge a$ and $<b$ in the Taylor series at $0$.
The following {\em Lie iteration} which brings a vector field $X$ back to its normal form~:
\vskip0.2cm
$$\begin{array}{ ll }
\ & \\
X_{n+1}&=e^{-[-,v_n]}X_n,\\
\ & \\
S_{n+1}&=[X_n-[A_n,v_n]]_{2^n}^{2^{n+1}},\\
\  & \\
A_{n+1}&=A_n+S_{n+1},\\
\ & \\
v_{n+1}&=j_{A_n}([X_n]_{2^{n+1}}^{2^{n+2}})\\
\ & \\
\end{array}$$
\end{proof}
\subsection{Relation to the Poincar\'e-Dulac normal form}
The theorem implies the classical Poincar\'e-Dulac theorem. In this case, our perturbation does not contain the $\mu$ parameters.

 We start a vector field 
 $$X=S+X',\;\;X \in \Xth$$
where $X'$ are terms of positive order and $S$ is semi-simple. We consider 
the associated perturbation of our corresponding versal deformation:
 $$X_v=S_v+X' $$
So  $X$ is obtained from $X_v$ by setting all $\phi_i=0$. The above
Theorem \ref{T::formal} tells us that there exists an automorphism $\varphi \in Aut(\Oth[[\phi,u]])$ such that
 $$\p(X_v)= S_v\;\; \mod \Xth^S[[\phi]]$$
This automorphism $\p \in Aut(R)$ maps $\phi_j$ to $\varphi(\phi_j)=\phi_j+\ldots \in \CM[[\phi,u]]$. By the implicit function theorem, we may solve  
the equations
$\p(\phi_j)=0$ by $\phi_j=g_j(u)$ for certain series $g_j(u)$. So we obtain
\begin{align*}
\p(X)&=\sum_{i=1}^d (\l_i+g_i(u))x_i \d_{x_i}+\sum_{j=1}^p (g,R_j)u_j\d_{u_j} \; \mod \Xth^S
\end{align*}
which is the usual Poincar\'e-Dulac normal form using Bruno variables.
\subsection{The versality property}
The result implies that the detuning deformation is formally versal. Indeed, let
$$X=S+T(\mu)\in \Der_\CM(\Oth[[\mu]]) $$
be an arbitrary deformation of $S$, so
$$X=\sum_{i=1}^d (\l_ix_i+T_i(\mu,x)) \d_{x_i},$$ 
such that $T_i(\mu=0)=0$. We  form the sum deformation:
$$X_v=S_v+T(\mu). $$
The theorem above states that $X_v$ is mapped via an automorphism $\p \in \Aut(\Oth[[u,\phi,\mu]])$ to $S_v$. Again the restriction of $X_v$ to $\phi=0$ is the initial deformation 
$X$. Solving the equations $\p(\phi_j)=0$ by $\phi_j=g_j(u,\mu)$, we get that
$$\p(\widetilde{X})= \sum_{i=1}^d (\l_i+g_i(u,\mu))x_i \d_{x_i}+(g,R_i)u_i\d_{u_i}\ \mod \Xt^S[[\mu]].$$
If we wish one may eliminate the $u$-variables using the ideal $J$ and get
\[ \p(X) = \sum_{i=1}^d (\l_i+g_i(x^R,\mu)) x_i\d_{x_i}\]
\subsection{The Bruno-Stolovitch ideal}
Consider a vector field in normal form
$$Y= \sum_{i=1}^d (\l_i+g_i(u,\mu))x_i \d_{x_i}+\sum_{j=1}^p(g,R_j)u_j\d_{u_j}\;\mod \Xt^S[[\mu]] $$
here $g=(g_1,\ldots,g_p)$, $R_j=(R_{j,1},\ldots, R_{j,p}$ and $(g,R_j)=\sum_{i=1}^p a_i R_{j,i} $. The graph ideal $J$ is by definition $Y$-invariant, but it contains also a bigger invariant ideal $I_{\mu}$ obtained by adding the functions $(g,R_i)$'s.
\begin{definition} The Bruno-Stolovitch ideal of the vector field $Y$ 
is the ideal
\[I_{\mu}=J+((g,R_1),\ldots,(g,R_p)) \subset \Oth[[u,\mu]]\]
\end{definition}

\begin{lemma} The ideal $I_{\mu}$ is $Y$-invariant:
$$f \in I_{\mu} \implies Y(f) \in I_{\mu} $$
\end{lemma}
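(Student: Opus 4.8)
The plan is to reduce $Y$-invariance of $I_\mu$ to a computation on a generating set. Since $Y$ is a derivation of $\Oth[[u,\mu]]$, if $Y$ carries each of the generators $J_1,\dots,J_p,(g,R_1),\dots,(g,R_p)$ of $I_\mu$ into $I_\mu$, then for an arbitrary $f=\sum_\alpha h_\alpha f_\alpha\in I_\mu$ one gets $Y(f)=\sum_\alpha\bigl(h_\alpha\,Y(f_\alpha)+f_\alpha\,Y(h_\alpha)\bigr)\in I_\mu$. So everything comes down to evaluating $Y$ on those $2p$ generators.

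Before doing so, I would dispose of the ambiguity in the definition of $Y$. The module $\Xt^S[[\mu]]$ is generated over $\Oth[[u,\mu]]$ by the fields $(x^{R_i}-u_i)x_j\d_{x_j}=-J_i\,x_j\d_{x_j}$, whose coefficients all lie in the graph ideal $J$; hence any $Z\in\Xt^S[[\mu]]$ sends the whole ring into $J\subseteq I_\mu$, and replacing $Y$ by another representative modulo $\Xt^S[[\mu]]$ cannot change whether $Y(f)\in I_\mu$. We may therefore work with the genuine derivation $Y=\sum_{i=1}^d(\l_i+g_i)x_i\d_{x_i}+\sum_{j=1}^p(g,R_j)u_j\d_{u_j}$, with $g=(g_1,\dots,g_d)$ and each $g_i=g_i(u,\mu)$.

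On the generators of $J$ the computation is the one already used for the resonant cone: since $(\Lambda,R_k)=0$ we have $Y(x^{R_k})=\sum_{i=1}^d(\l_i+g_i)R_{k,i}\,x^{R_k}=(g,R_k)\,x^{R_k}$, while $Y(u_k)=(g,R_k)u_k$, so $Y(J_k)=(g,R_k)(u_k-x^{R_k})=(g,R_k)J_k\in J\subseteq I_\mu$. For the adjoined generators, the crucial point is that $g_i$ is a series in $u,\mu$ only, so the $\d_{x_i}$-part of $Y$ annihilates it and $Y(g_i)=\sum_{j=1}^p(\d_{u_j}g_i)\,Y(u_j)=\sum_{j=1}^p(g,R_j)\,u_j\,\d_{u_j}g_i$, which lies in $I_\mu$ because each $(g,R_j)$ is one of its generators. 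Hence $Y\bigl((g,R_k)\bigr)=\sum_{i=1}^d R_{k,i}\,Y(g_i)\in I_\mu$, which finishes the argument.

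The only genuine obstacle is conceptual: one has to notice that the normal-form coefficients $g_i$ involve neither the $x$-variables nor derivatives in the $\phi$-directions (they are functions of $u$ and $\mu$ alone), so that differentiating them along $Y$ activates precisely the $\d_{u_j}$-component, whose coefficients $(g,R_j)u_j$ are exactly the functions adjoined to $J$ when forming $I_\mu$. Everything else is the one-line identity $(\Lambda,R_k)=0$ together with the remark that a relative resonant field has coefficients in $J$; there is no small-denominator or convergence content in this lemma, it is purely algebraic.
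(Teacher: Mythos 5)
Your proof is correct and follows essentially the same route as the paper's: a direct computation on the generators, using $(\Lambda,R_k)=0$ to obtain $Y(u_k)=(g,R_k)u_k \mod J$ and the chain rule $Y(g)=(D_ug)\,Y(u)$ to conclude $Y\bigl((g,R_k)\bigr)\in I_\mu$. Your extra observation that changing $Y$ by an element of $\Xt^S[[\mu]]$ only alters values by elements of $J$ is a worthwhile precision that the paper leaves implicit.
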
 
\begin{proof}
A direct computations shows that
$$Y(u_i)=(g,R_i) u_i \quad \mod J $$
and therefore $Y(u_i) \in I_{\mu}$.  In vector notations
$$Y(g)=(D_ug)Y(u) $$
which can be interpreted as the image of the vector field $Y$ under the differential $D_ug$ of the map $g$.
Therefore
$$Y((g,R_i))=((D_u g) Y(u),R_i) \quad \mod J $$
and the right hand side lies in $I_{\mu}$. This proves the lemma.
\end{proof}
This means that the ideal $I$ generated by the $x^{R_j} - u_j$'s and the 
functions $(g,R_j)$ is an invariant ideal of the vector field.

Assume for a moment that for a fixed value of $\mu$ the functions $g_i$'s are 
analytic in a small neighbourhood of the origin. Then the vector field $Y(\mu,-)$ 
is analytic and tangent to the {\em Bruno-Stolovitch varieties}:
$$V_\mu:=\{ (x,u) \in \CM^{d+p}: (g(u,\mu),R_i)=0,\ u=x^R \}. $$
As Bruno showed in~\cite{Bruno_resonance}, as a general rule, one cannot hope 
to have even a $\mu$-dependant analytic family of the varieties. However in 
the formal case, the varieties $V_\mu$ are formal schemes and, as one would 
expect from classical KAM theory, these form a Whitney $C^\infty$-family of 
analytic varieties over a positive measure set.
\subsection{Piartly's example}
The versal deformation
$$Y=(\a +\mu+xy+\dots ) x\d_{x}+(-\a +\mu+xy+\dots  )y\d_{y} .$$  
 is given by:
$$S_v=(\a +\phi_1 ) x\d_{x}+(-\a +\phi_2  )y\d_{y}   $$
and by versality of this deformation, there exists an automorphism $\p$ such that
$$\p(\widetilde{Y})=(\a +g_1(u,\mu) ) x \d_x+(-\a +g_2(u,\mu ))y\d_y+(g_1+g_2)u\d_u \ \mod M $$
As the only  resonant exponent is $(1,1)$, the Bruno-Stolovitch ideal is generated by 
$$((1,1),g)=g_1(u,\mu)+g_2(u,\mu)$$
and by the function $u-xy$ which generates the resonance ideal. 

An explicit computations shows that we have the expansion 
$$g_1(u,\mu)+g_2(u,\mu)=2u+\mu+\dots $$
The function 
$$g_1(xy,\mu)+g_2(xy,\mu)= 2xy+\mu+\dots$$
defining the Bruno ideal has an $A_1$-singularity.

If we assume this expansion to be analytic for some small value of of $\mu$ 
then the manifold $V_\mu$ is a cylinder for $\mu \neq 0$ and  a cone for 
$\mu=0$. Assuming $\a$ imaginary, we may 
restrict the field to the real plane $\RM^2 \approx \CM \subset \CM^2$ 
defined by the equation  
$$g_1(z\bar z,\mu)+g_2(z\bar z ,\mu)=2|z|^2+\mu+\dots$$
is diffeomorphic to a circle for $\mu<0$. We obtain the standard picture of
the family of vanishing cycles of the $A_1$-singularity (see~\cite{AVGII,AVGL2}).
 \subsection{The multi-dimensional Hopf bifurcation}
It is easy to generalise the above picture to higher dimensions. 
We start with a vector field of the form
$$Y= \sum_{i=1}^d(\a_i +\mu_i+x_iy_i+\dots ) x_i\d_{x_i}+(-\a_i +\mu_i+x_iy_i+\dots  )y_i\d_{y_i} $$ 
where the $\a_i$ are $\QM$-independent. 
We consider the versal deformation:
$$S_v= \sum_{i=1}^d(\a_i+\phi_i)x_i\d_{x_i}+\sum_{i=1}^d(-\a_i+\phi_{i+d})y_i\d_{y_i}.$$
The vector field $\widehat{Y}$ is  isomorphic to
$$Y= \sum_{i=1}^d(\a_i+g_i(u,\mu))x_i\d_{x_i}+\sum_{i=1}^d(-\a_i+g_{i+d}(u,\mu))y_i\d_{y_i}$$
The resonance ideal is generated by the $u_i-x_iy_i$'s and the Bruno-Stolovitch ideal is generated by these functions to which we add the functions $g_i+g_{i+d}$. In this case, if the function turns out to be analytic form some $\mu$-value and if we consider real structure $x_=\overline{y_i}$ as for $d=1$, the real parts of the manifolds $V_\mu$, for $\mu_i<0$, are tori 
and the vector field carries a quasi-periodic motion.

\section{Versal deformations and invariant varieties} 
Iteration above was defined on the level of formal power series. But one can define
a similar iteration defined for appropriate Banach spaces of analytic functions.
The convergence of the iteration procedure then is  a direct consequence of 
the general theory developed in~\cite{Functors} and that we will now briefly recall. 
We will be very sketchy as full details were already given in~\cite{Functors}.
It is likely that one may produce complete and direct proofs along the lines of 
Stolovitch~\cite{Stolovitch_94,Stolovitch_KAM}, but this would require a much longer
line of arguments and computations.
\subsection{Bruno sequences}
For convergence, our precedure requires Bruno-type conditions. To fix notations, we 
briefly recall the standard facts.
\begin{definition}[\cite{Bruno}]
A strictly monotone positive sequence $a$ is called a  {\em Bruno sequence} 
if the infinite product
\[ \prod_{k=0}^{\infty} a_k^{1/2^{k}}\] 
converges to a strictly positive number or equivalently if
$$\sum_{k\geq 0} \left| \frac{\log a_k}{2^{k}}\right| <+\infty. $$
\end{definition} 
We denote respectively by $\Bt^+$ and $\Bt^{-}$ the set 
of increasing and decreasing Bruno sequences.\\
Attached to a frequency vector $\L \in \CM^{d}$, we define the sequence
$\sigma(\L)$ with terms
$$\s(\L)_k :=\min_{i} \{ |(\L,J-E_i)| \neq 0: J \in \NM^{d} \setminus \{ 0 \}, \| J \| \leq 2^k \} $$
Here $E_i(=0,\dots,0,1,0,\dots,0)$ denotes the standard basis of the vector space $\CM^d$.
We say that the vector $\L$ is {\em Bruno} if the sequence $\s(\L)$ is a decreasing Bruno sequence.
\subsection{Functional analytic setting (part I)}
We fix the frequency vector $\L$ of our linear vector field $S$.
We will replace the the ring $\Oth[[u]]$ of formal power series by an 
appropriate system of Banach space of holomorphic functions. 
To describe these, we first consider the space $\CM^d  \times \CM^p$, with coordinates $x$, $u$. 
We put 
\[ D^d_s:=\{x \in \CM^d\;|\;|x_i|<s,\;i=1,2,\ldots,d \}\]
\[ D^p_s:=\{ u \in \CM^p\;|\;|u_j|<s,\;j=1,2,\ldots,p \}\]
We define 
$$U=D^d \times D^k \to \NM \times \RM_{>0}$$ 
as a relative set with fibres independent on $n \in \NM$:
$$U_{n,s}=D^d_s \times D^p_s $$
We then consider the Banach spaces of holomorphic functions and vector fields
on these sets (with continuous extension to the boundary),
\[ E_{n,t}:=\Ot^c(U_{n.t}),\;\;\;F_{n,t}:=\Xt^c(U_{n,t})\]
for which there are corresponding restriction maps of norm $\le 1$
\[ E_{n,t} \to E_{n,s},\;\; s < t,\;\;\;\;E_{m,s} \to E_{n,s},\;\; n > m\]
and similarly for $\Xt$. In the terminology of \cite{Functors} such a system of
Banach spaces is called an {\em Arnold space} and write simply $E=\Ot^c(W)$, etc.

The set $U_{n,s}$ has the {\em Huygens property} for the constant sequence $a_n=1$, by which we mean
\[U_{n,s}+ D_{a_n(t-s)}  \subset U_{n,t} .\]
This guarantees that the locality estimates are satisfied for differential
operators and when passing from one Arnold space to another.

In the iteration process, we will also have to choose an appropriate sequence
\[s_0 >s_1 >s_2 \ldots >s_n >s_{n+1}> \ldots > s_{\infty} >0\]
and form the 'pull-back', i.e. we form
\[ E_n:=E_{n,s_n}, n=0,1,2,\ldots\]
for which we still have restriction maps
\[ E_0 \to E_1 \to E_2 \to \ldots \]
As sequence $(x_n)$ with $x_n \in E_n$ is called convergent if it is a Cauchy 
sequence in the sense that
$$\forall \e>0,\ \exists N ,\; n\geq N \implies | \iota x_n -x_{n+p}| \leq \e $$
where $\iota=\iota_{n+p,n}$ is the restriction map from $E_{n}$ to $E_{n+p}$.
\subsection{The abstract versal deformation theorem}
\label{SS::abstract}
\begin{theorem}[\cite{Functors}] Let $\alg$ be an Arnold-Lie algebra, 
$X \in \alg$ and $\mathfrak{h} \subset \alg$ a sub-Arnold-Lie algebra which is a direct summand, that is,
the projection on $\mathfrak{h}$ is a local operator whose norm is an integrable Bruno
sequence. Assume that 
\begin{enumerate}[{\rm 1)}]
 \item there exists a  a local operator $L \in \Lt(\alg,{\bf M})$ and a cutoff operator
$\kappa \in \Kt(M,M)$
solving approximatively the homological equation modulo $\mathfrak{h}$, in the sense that:
$$[L(Y),X]=Y+\kappa(Y) \quad \mod \mathfrak{h}$$
\item the norm sequence $|L|,\ |\kappa|$ are integrable Bruno sequences.
\end{enumerate}
Then there exists $r \in \RM_{>0}$ and $k$ such that for any $Y \in rt^k(B_{\alg})_t$ there exists $v \in \G(\NM,\alg)$ satisfying
$$e^v(X+rY )=\iota X\ \mod \mathfrak{h}$$
\end{theorem}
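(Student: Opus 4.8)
The plan is to deduce the abstract versal deformation theorem from the general convergence machinery of \cite{Functors} by setting up the right KAM-style quadratic iteration and verifying that all the operators that appear have norm sequences that are integrable Bruno sequences. First I would fix the Arnold-Lie algebra $\alg$, the element $X$, the direct-summand sub-Arnold-Lie algebra $\mathfrak h$ with its local projection, and the data $L\in\Lt(\alg,\mathbf M)$, $\kappa\in\Kt(M,M)$ satisfying $[L(Y),X]=Y+\kappa(Y)\ \mathrm{mod}\ \mathfrak h$. The key construction is the \emph{Lie iteration}: starting from $X_0=X+rY$, $A_0=X$, $v_0=\kappa$-truncated solution of the homological equation against $Y$, one runs
\[
X_{n+1}=e^{-[-,v_n]}X_n,\qquad A_{n+1}=A_n+S_{n+1},\qquad v_{n+1}=j_{A_n}\bigl([X_n]_{2^{n+1}}^{2^{n+2}}\bigr),
\]
where $j_V(Y)=L(Y-L(Y)(T))$ plays the role of the approximate inverse to $\mathrm{ad}$ against the \emph{current} approximation $V=A_n$, exactly as in the proof of Theorem \ref{T::formal}. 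The point of the construction is that after applying $e^{-[-,v_n]}$ the degree-$\le 2^{n+1}$ part of $X_n$ is corrected to lie in $\mathfrak h$ up to terms of order $\ge 2^{n+1}$, so the "error" $X_n-A_n\ \mathrm{mod}\ \mathfrak h$ has order $\ge 2^n$ and the scheme is quadratically convergent at the formal level.

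Next I would port this purely formal recursion to the Banach-space level using the functional-analytic setting of \cite{Functors}: choose a strictly decreasing sequence $s_0>s_1>\cdots>s_\infty>0$ of radii with $\sum(s_n-s_{n+1})<\infty$, form the pull-back Arnold spaces $E_n=E_{n,s_n}$, and bound at each step the loss of domain incurred by composing with the time-one flow $e^{-[-,v_n]}$. The estimate for the flow is the standard one: $|e^{-[-,v_n]}X_n-X_n|$ is controlled by $|v_n|$ times a locality constant coming from the Huygens property, and $|v_n|\le |j_{A_n}|\cdot|[X_n]_{2^{n+1}}^{2^{n+2}}|$. The truncation $[-]_a^b$ and the homological solution operator $L$ both have norm sequences governed by $\sigma(\Lambda)^{-1}$ (after the detuning trick that replaces division by the possibly-zero resonant denominators by differentiation in $\phi$, which is norm-$\le$const); by hypothesis $|L|$ and $|\kappa|$ are integrable Bruno sequences, and the projection on $\mathfrak h$ is too. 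The dyadic truncation contributes at step $n$ a geometric gain $t^{2^n}$ on a disc of radius $t<1$, which dominates the at-most-Bruno growth of $|j_{A_n}|$, so the product $\prod_n(1+\text{error}_n)$ converges and the $v_n$ are summable. This is where the hypothesis "$r\in\RM_{>0}$ and $k$ such that $Y\in rt^k(B_\alg)_t$" enters: taking $Y$ small enough in the weighted norm guarantees $v_0$ is small enough to start the geometric cascade, and $A_n$ stays within a fixed neighbourhood of $X$ so that $j_{A_n}$ keeps the same norm bounds throughout.

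I would then assemble the limit: the sequence $v=(v_n)$ lies in $\G(\NM,\alg)$ and the infinite composition $e^v:=\lim e^{-[-,v_0]}\cdots e^{-[-,v_n]}$ converges in the Cauchy sense of the pull-back system, giving $e^v(X+rY)=\iota X\ \mathrm{mod}\ \mathfrak h$ on $U_{\infty,s_\infty}$ — this is precisely the assertion, with $\iota$ the restriction to the limit domain. Most of this is bookkeeping once the abstract convergence theorem of \cite{Functors} is invoked; the genuine content is checking that the Lie iteration written above \emph{is} an admissible iteration for that theorem, i.e. that each of $L$, $\kappa$, $j_{A_n}$, the adjoint actions, and the $\mathfrak h$-projection is a \emph{local} operator in the technical sense and that their norms are integrable Bruno sequences. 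The main obstacle I expect is exactly the control of $j_{A_n}$ \emph{uniformly in $n$}: because $j_V$ is built from $V=A_n$ rather than the fixed linear part, one must show that the quadratic convergence keeps $A_n-A_0$ small enough (in a suitable Bruno-weighted norm) that the homological operator against $A_n$ still inverts with the same Bruno bound as against $S_v$ — a small-divisor perturbation estimate that is the heart of why the detuning variables $\phi$ were introduced, since they convert the non-invertibility on the resonant directions into the harmless operation $L(x_i\d_{x_i})=\d_{\phi_i}$.
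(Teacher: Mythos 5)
The paper does not actually prove this statement: it is quoted verbatim from the companion paper \cite{Functors}, and immediately after the statement the authors write that they ``will not give all the details here'' and only indicate which functional spaces to plug in. So there is no in-paper argument to measure your sketch against; what you have written is a plausible reconstruction of the proof that lives in \cite{Functors}, and its overall strategy --- a quadratically convergent Lie iteration $X_{n+1}=e^{-[-,v_n]}X_n$ run on a pull-back system of Arnold spaces $E_n=E_{n,s_n}$, with convergence secured by the integrable-Bruno-sequence hypotheses on $|L|$, $|\kappa|$ and the $\mathfrak h$-projection --- is indeed the one the paper gestures at (it is the convergent analogue of the iteration displayed in the proof of the formal Theorem~\ref{T::formal}). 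Your identification of the uniform control of $j_{A_n}$ as the crux is also apt.

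One substantive caution: you repeatedly argue at the level of the \emph{concrete} vector-field application rather than the abstract statement. The theorem as stated concerns an arbitrary Arnold-Lie algebra $\alg$ with a fixed local operator $L$ and cutoff $\kappa$; there is no notion of ``degree'', no dyadic truncation $[-]_{2^n}^{2^{n+1}}$, no detuning variables $\phi$, and no identity $L(x_i\d_{x_i})=\d_{\phi_i}$ at this level of generality --- those belong to the verification of the hypotheses in Theorems~\ref{T::formal} and~\ref{T::convergent}, not to the abstract theorem itself. In the abstract setting the quadratic smallness that your ``geometric gain $t^{2^n}$'' is meant to supply must instead come from the cutoff operator $\kappa$ (whose norm sequence is an integrable Bruno sequence) together with the weighted smallness hypothesis $Y\in rt^k(B_\alg)_t$; a proof of the statement as written has to be carried out using only $L$, $\kappa$, the $\mathfrak h$-projection, the Lie bracket, and the locality/Huygens estimates. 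As it stands, your argument proves (modulo the deferred estimates) the application of the theorem rather than the theorem, and is to that extent circular when it invokes ``the abstract convergence theorem of \cite{Functors}'' --- which is precisely the statement under discussion.
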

We will not give all the details here but we will indicate which functional space should be considered to deduce the theorem as in the next paragraph we prove a stronger result with parameters for families of vector fields.


Appyling the abstract versal deformation theorem with 
$\alg=\Xt^c(U)$ and $\mathfrak{h}$ the $\Ot^c(U)$-module generated by $(\Xt^S)^c(U)$, we deduce the Stolovitch normal form theorem \ref{SS::Stolovitch_nf}.
 
\subsection{Functional analytic setting (part II)}
We will now formulate and sketch the proof of the above theorem with additional
parameters $\phi$ and $\mu$. We have to make only notational changes in our set-up-

For the domain of $\phi$-variables, we have to throw away the appropriate 
neighbourhoods of the resonant hyperplanes. We fix a decreasing sequence 
$a=(a_n)$ and $s_0>0$ and consider the set 
$$Z \to \NM \times ]0,s_0] $$
with fibres
\[Z_{n,s}:=Z_{n,s}(\Lambda,a,s_0):=\{\phi \in D_s^{d}: \forall k \leq n,\s(\Lambda+\phi)_k  \geq  a_k(s_0-s) \} \]
This set has the {\em Huygens property}, by which we mean
\[Z_{n,s}+ D_{a^*_n(t-s)}  \subset Z_{n,t} ,\;\;\; a^*_n:=\frac{a_n}{2^n}.\]
The addition of the $\mu$-variables is straightforward. We consider
 \[ D^l_s:=\{ \mu \in \CM^l\;|\;|\mu_j|<s,\;j=1,2,\ldots,l \}\]
and get a relative set
 $$W \to \NM \times ]0,s_0] $$
 with fibre
\[ W_{n,s}:=D^d_s \times Z_{n,s} \times D^p_s  \times D^l_s \subset \CM^d \times \CM^d \times \CM^p\times \CM^l,\]
where $\iota=\iota_{n+p,n}$ is the restriction map from $E_{n}$ to $E_{n+p}$.
 \subsection{Versal deformations over a Cantor set}

Applying the abstract versal deformation theorem with 
 $$\alg=\Xt^c(W), \mathfrak{h}=(\Xt^S)^c(W)$$  and the map $L$ defined analoguous to
the $L$ in \ref{SS::versal}, we get the following result:
 
\begin{theorem}
\label{T::convergent} Let $\Lambda \in \CM^d$ be a Bruno vector satisfying the positivity conditions.
Consider a derivation
$$S_v=\sum_{i=1}^d (\l_i+\phi_i) x_i\d_{x_i} \in \Xt^c(W)_0   $$
of the algebra $\Ot^c(W)$. Then
for any other derivation of the form
$$X_v:= S_v+\dots \in \Xt^c(W)_0  $$
 there exists a morphism 
 $$\p:\Xt^c(W)_0 \to \Xt^c(W)_\infty$$  which has the following properties:
 \begin{enumerate}[{\rm 1)}]
  \item $\p(  X_v)= S_v \quad \mod\ (\Xt^c(W))^S $.
  \item $\p(u_j)=u_j$ for $j=1,\dots,k$.
  \item  $\p(\mu_j)=\mu_j$ for $j=1,\dots,l$.
  \item $\p(\phi_j)=\phi_j+\dots \in \Ot^c(D^{d+p})$
 \end{enumerate}
 \end{theorem}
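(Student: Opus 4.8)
The plan is to deduce Theorem~\ref{T::convergent} from the abstract versal deformation theorem of \S\ref{SS::abstract} by choosing the correct Arnold--Lie algebra, the correct summand, and the correct homological operator $L$, and then checking that all the required norm sequences are integrable Bruno sequences. Concretely, I would take $\alg=\Xt^c(W)$, the Arnold space of holomorphic relative vector fields on the relative set $W$ introduced in Functional analytic setting (part~II), with its Lie bracket; the element $X\in\alg$ is $S_v$, and the perturbation is the $rY$-part of $X_v=S_v+\dots$. The sub-Arnold--Lie algebra is $\mathfrak{h}=(\Xt^S)^c(W)$, the $\Ot^c(W)$-module generated by the resonant fields $x_i\d_{x_i}$; under the positivity assumption P2) this module is free on the $x_i\d_{x_i}$, so the projection $\alg\to\mathfrak{h}$ is literally ``extract the diagonal coefficients'', which is a local operator of norm~$1$ — in particular an integrable (constant) Bruno sequence, so $\mathfrak h$ is a direct summand in the required sense.

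Next I would verify the homological equation. As in \S\ref{SS::versal}, the infinitesimal action $X\mapsto[X,S_v]$ is $\CM[[u,\phi,\mu]]$-linear and diagonal in the monomial basis with eigenvalue $\l_{K,i}=\l_i+\phi_i-(\Lambda+\phi,K)$ on $x^K\d_{x_i}$, and one sets $L(x^K\d_{x_i})=\l_{K,i}^{-1}x^K\d_{x_i}$ for non-resonant $K$ and $L(x_i\d_{x_i})=\d_{\phi_i}$. The point of the Cantor fibres $Z_{n,s}$ is precisely that on $Z_{n,s}$ one has the lower bound $|\s(\Lambda+\phi)_k|\ge a_k(s_0-s)$ for $k\le n$, which controls the small divisors $\l_{K,i}$ for $\|K\|\le 2^n$; since $\Lambda$ is a Bruno vector, $\s(\Lambda)$ — and hence, after the excision, the relevant product of divisors — is a Bruno sequence, so $|L|$ is an integrable Bruno sequence. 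The cutoff operator $\kappa$ is the usual truncation keeping degrees in the dyadic window, whose norm is likewise an integrable Bruno sequence, and by construction $[L(Y),S_v]=Y+\kappa(Y)\bmod\mathfrak h$. With hypotheses 1) and 2) of the abstract theorem in place, it delivers $r>0$, $k$, and $v\in\G(\NM,\alg)$ with $e^v(S_v+rY)=\iota S_v\bmod\mathfrak h$; rescaling the perturbation absorbs the factor $r$ and yields $\p:=e^v$ with property 1).

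Finally I would read off properties 2)--4) from the shape of $L$ and of the iteration. Since $L$ never differentiates in the $u$- or $\mu$-directions and never produces $\d_{u_j}$ or $\d_{\mu_j}$ components (the $v_n$ live in $\Der=\Xth[[\phi,\mu]]$, i.e. have no $\d_\mu$, and $L(x_i\d_{x_i})=\d_{\phi_i}$ only creates $\d_\phi$-components), the flow $e^v$ fixes $u_j$ and $\mu_j$ on the nose, giving 2) and 3); and $\p(\phi_j)=\phi_j+\dots$ with the correction lying in $\Ot^c(D^{d+p})$ because it is built only from the $\d_{\phi_i}$-parts acting on $\phi_j$, which do not involve the frequency-excised variables in a singular way. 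The step I expect to be the genuine obstacle is the estimate making $|L|$ an integrable Bruno sequence: one must show that the excision $\s(\Lambda+\phi)_k\ge a_k(s_0-s)$ on $Z_{n,s}$, together with the Huygens property $Z_{n,s}+D_{a^*_n(t-s)}\subset Z_{n,t}$ with $a^*_n=a_n/2^n$, feeds a bound of the form $|L|_n\lesssim (a_n(s_0-s))^{-1}$ whose logarithm is summable against $2^{-n}$ — this is exactly where the Bruno condition on $\Lambda$ and the compatibility of the sequences $(a_n)$, $(s_n)$ enter, and it is the heart of the convergence argument recalled from \cite{Functors}; everything else is the bookkeeping of which variables the operators touch.
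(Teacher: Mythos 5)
Your proposal follows exactly the route the paper takes: the paper's entire argument is to apply the abstract versal deformation theorem of \cite{Functors} with $\alg=\Xt^c(W)$, $\mathfrak{h}=(\Xt^S)^c(W)$ and the operator $L$ carried over from the formal setting of \ref{SS::versal}, deferring the small-divisor estimates to the excised fibres $Z_{n,s}$ and the general theory of \cite{Functors}. You have correctly identified both the choices of $\alg$, $\mathfrak h$, $L$ and the genuine technical burden (the integrable Bruno bound on $|L|$), which the paper itself leaves to the reference, so your account is if anything more explicit than the paper's.
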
 
 As a corollary we get that the functions defining the Bruno-Stolovitch ideal lie in $\Ot^c(W)_\infty$, in particular they define a continuous family of invariant manifolds. Assuming non-degeneracy conditions on the frequency (for instance that the map is a submersion), these are parametrised  by a positive measure set. For the particular case of the multi-dimensional Hopf bifurcation, we get a family of tori carrying quasi-periodic motions generalising the theorem of  Chenciner-Li~\cite{Chenciner_Hopf,Li_Hopf}.  Stolovitch KAM theorem~\cite{Stolovitch_KAM} can also be deduced from the convergence of the Lie iteration on this setting.
\bibliographystyle{plain}
\bibliography{master.bib}
\end{document}